\documentclass{amsart}
\usepackage{amsmath}
\usepackage{graphicx}
\usepackage{amsfonts}
\usepackage{amssymb}
\usepackage{pdfsync}
\usepackage{color}
\usepackage{mathrsfs}
\usepackage{epsfig}
\usepackage{url}
\usepackage{mathrsfs,a4wide}
\usepackage{url}
\usepackage{calligra}
\usepackage{mathtools}
\usepackage{esint}

\makeatletter
\def\@splitop#1#2\@nil{$\mathscr{#1}\!\!$\calligra#2\,\,}
\newcommand*\DeclareCursiveOperator[2]{%

  \newcommand#1{\mathop{\mbox{\@splitop#2\@nil}}\nolimits}}
\makeatother
\DeclareCursiveOperator{\Bnew}{B}
\DeclareCursiveOperator{\Cnew}{C}
\DeclareCursiveOperator{\Dnew}{D}
\DeclareCursiveOperator{\defe}{Def}

\theoremstyle{plain}
\newtheorem{theorem}{Theorem}[section]
\newtheorem{lemma}[theorem]{Lemma}
\newtheorem{proposition}[theorem]{Proposition}

\newtheorem{remark}[theorem]{Remark}

\theoremstyle{definition}
\numberwithin{equation}{section}

\newcommand{\rs}{{\mathcal R}}
\newcommand{\sss}{{\mathcal S}}

\newcommand{\E}{{\mathscr E}}

\newcommand{\F}{{\mathscr F}_L}

\newcommand{\G}{\mathscr{G}}

\newcommand{\R}{{\mathbb R}}
\newcommand{\N}{{\mathbb N}}
\newcommand{\Z}{{\mathbb Z}}

\newcommand{\C}{{\mathbb C}}

\newcommand{\ud}{\mathrm{d} }

\newcommand{\ep}{\varepsilon}

\newcommand{\newatop}{\genfrac{}{}{0pt}{1}}

\newcommand{\loc}{{\mathrm{loc}}}
\newcommand{\X}{\mathcal{X}}

\newcommand{\XX}{\mathrm{X}}

\newcommand{\res}{\mathop{\hbox{\vrule height 7pt width .5pt depth 0pt
\vrule height .5pt width 6pt depth 0pt}}\nolimits}

\title
[Periodic ground states for nonlocal energies]{Periodic ground states  
for a
one-parameter family of 
nonlocal energies on the real line}

\author[L. De Luca]
{Lucia De Luca}
\address[L. De Luca]{Istituto per le Applicazioni del Calcolo ``M. Picone'', IAC-CNR, via dei Taurini 19, 00185 Roma, Italy}
\email[L. De Luca]{lucia.deluca@cnr.it}

\author[G. Pini]
{Giovanni Pini}
\address[G. Pini]{Dip. di Matematica, Univ-Roma I ``La Sapienza'', Piazzale Aldo Moro 5, 00185 Roma, Italy}
\email[G. Pini]{giovanni.pini@uniroma1.it}

\author[M. Ponsiglione]
{Marcello Ponsiglione}
\address[M. Ponsiglione]{Dip. di Matematica, Univ-Roma I ``La Sapienza'', Piazzale Aldo Moro 5, 00185 Roma, Italy}
\email[M. Ponsiglione]{ponsigli@mat.uniroma1.it}

\author[F. Santilli]
{Francesco Santilli}
\address[F. Santilli]{Dip. di Matematica, Univ-Roma I ``La Sapienza'', Piazzale Aldo Moro 5, 00185 Roma, Italy}
\email[F. Santilli]{francesco.santilli@mat.uniroma1.it}

\usepackage{hyperref}
\begin{document}
\vskip .2truecm

\begin{abstract}
\small{In dimension one, we introduce a one-parameter family of nonlocal  energies, including subcritical Gagliardo seminorms and Riesz functionals, 
acting on scalar functions whose derivative is given by a periodic configuration of screened Dirac masses. 
We prove that the global minimizers are given by the equi-spaced configurations of particles. 

This result is achieved by representing the energy functionals as interaction potentials depending on the geodesic  distances between the particles, exploiting the complete monotonicity properties of such potentials and invoking the celebrated Cohn-Kumar Universality Theorem.

\vskip .3truecm \noindent \textsc{Keywords}: Calculus of Variations; Fractional Seminorms;  Periodic Minimizers
\vskip.1truecm \noindent \textsc{AMS Mathematics Subject Classification}: 
49J45, 82C22, 52C35



}
\end{abstract}
\maketitle

{\small \tableofcontents}

\section*{Introduction}
This paper deals with the minimization of nonlocal energies defined on periodic scalar potentials of one variable, associated with screened particles lying on $\R$. Problems of this kind have attracted considerable attention in the mathematical community, and several specific energies have been investigated in connection with applications to crystallization, Materials Science, and gas dynamics. 

The aim of this paper is to identify a natural one-parameter family of energy functionals that encompasses well-studied models as well as new and often overlooked ones, and to provide a unified and concise proof of the periodicity of minimizers.

Given $L\in\N$, we consider $L$-periodic empirical measures $\mu$ given by the sum of exactly $L$ Dirac masses on the interval $[0,L)$. To these measures we associate admissible periodic functions $u$ satisfying $\mathrm{D} u=\ud x -\mu$ on $[0,L)$,  namely, the particles are screened by the Lebesgue measure.
We then introduce a one-parameter family of energy functionals $\E^\sigma$ (see \eqref{fraclapi}), where the parameter $\sigma$ ranges in the interval $(-\frac{1}{2}, \frac{1}{2})$. For positive values of $\sigma$, these energies are Gagliardo-type seminorms; for negative values of $\sigma$, they are Riesz-type functionals, that may be seen as the continuous long-range counterpart of  XY energies defined on densities. Although the latter are less canonical in variational problems, they appear to be the natural extension of Gagliardo seminorms to negative values of $\sigma$. These two branches of energy functionals meet, after suitable scaling, at the case $\sigma=0$, which is simply the squared $L^2$-norm \cite{MS}. In this respect, the considered family enjoys natural continuity properties with respect to the parameter $\sigma$ \cite{DNP, CDKNP, DMPS, PS}, in terms of $\Gamma$-convergence, and has recently been identified as a natural one-parameter family to be studied as a whole in connection with geometric evolutions \cite{CDNP} and parabolic problems \cite{CDKNP, DMPS}.

In this paper, we consider the same family of functionals in the context of one-dimensional 
periodic distributions of screened particles. Again, while the case $\sigma>0$ has been extensively studied in the literature \cite{GM,  DPS, DGP}, its natural extension to negative values of $\sigma$ appears to be completely novel (see \cite{GLL} for a related model). Remarkably, the local case $\sigma=0$ has been a pivotal case study in this theory \cite{M93} (see also \cite{RW}), while the critical case $\sigma=\frac{1}{2}$ (not considered in this paper) has attracted much attention because of its relevance in Materials Science \cite{GM,  DPS, DGP}, gas dynamics \cite{SS, L} and random matrices \cite{BS}.

Our strategy is to exploit Fourier analysis to rewrite the energy functionals in terms of pairwise interactions defined directly on the empirical measures. Assuming periodicity, we deal with $L$ points lying on $\mathbb S^1$; 
then we disintegrate this interaction potential into a family $\{V_t\}_{t>0}$ of strictly completely monotone functions depending on the squared Euclidean distances between the particles on $\mathbb S^1$, 
thus
 fitting into the general Universality Theory of Cohn--Kumar \cite{CK}. Such a theory ensures that, among regular configurations without superpositions of particles (which have finite energy since $\sigma < \frac{1}{2}$), there exists a minimizer given by the periodic distribution of particles lying at the vertices of a regular $L$-gon, and that such a configuration is the unique minimizer up to rotations. We then prove that these are in fact the only minimizers, even when collisions of particles are allowed. Indeed,  on the one hand, optimal configurations minimize the  energy functional governed by $V_t$ for every  $t>0$; on the other hand, as $t$ vanishes, the  potentials $V_t$ blow up as soon as (at least) two   particles approach each other, thereby ruling out collisions for sufficiently small values of $t$.

{\vskip5pt
{\textsc{Acknowledgements:} LDL, GP, and FS are members of the Gruppo Nazionale per l'Analisi Matematica, la Probabilit\`a e le loro Applicazioni (GNAMPA) of the Istituto Nazionale di Alta Matematica (INdAM).
 
}

%

\section{Setting of the problem}\label{Suno}
In this section we introduce the energy functionals and the class of admissible functions we will deal with. 

\subsection{Configurations}  
Let $L \in\N$ be fixed.
We set
$
\X^{L}:= [0,L)^L.
$
Notice that the entries  of a generic configuration $X\in\X^{L}$ are not required to be distinct. 

For every $X=(x_1,\ldots,x_L)\in \X^{L}$ we define 
$$\sss(X):=\{\xi \in[0,L)\,:\,x_ l = \xi \textrm{ for some } l=1,\ldots,L\},
$$
and we denote by $\rs^L$ the set of configurations  for which all the entries are distinct, i.e.,
$$
\rs^L:=\{X\in\X^L\,:\,\sharp\sss(X)=L\}.
$$
%
For every $X=(x_1,\ldots, x_L)\in\X^L$  we denote by $\XX$ its {\it $L$-periodic extension}, namely the $L$-periodic sequence $\XX:= \{\mathsf x_z\}_{z\in\Z}$  with $\mathsf x_z=  x_z$ for  $z=1, \ldots, L$.
Moreover, for all $\tau\in \R$ we denote by $X+\tau$  the element of $\X^ L$ whose $ L$-periodic extension is $\XX + \tau$. 

We denote by $\{e_{ l}\}_{ l=1,\ldots,L}$ the canonical basis of $\R^L$ and we set 
\begin{equation}\label{trasla}
e[L]:=\sum_{ l=1}^{L}e_ l.
\end{equation}

Let $X\in\X^L$.
We denote by  $\mu^{X}:=\sum_{z \in \Z} \delta_{\mathsf x_z}$ the empirical measure associated to the configuration $X$
and by $u^X$ the (unique) function in $BV_{\rm loc}(\R)$ satisfying  (in the distributional sense) 
\begin{equation}\label{misura}
\rm D u=\ud x- \mu^X
\end{equation}
and 
\begin{equation}\label{zeroaverage}
\int_{0}^{L}u^X\,\ud x=0.
\end{equation}
Note that, in view of \eqref{misura},
\begin{equation}\label{linftyest}
u^X\in L^\infty(\R).
\end{equation}
\subsection{The energy functionals}
Let $-\frac 1 2<\sigma<\frac 1 2$. 
For every $X\in\X^L$ and for every $x\in (0,L)\setminus\sss(X)$
we define
\begin{equation}\label{fraclap0}
(-\Delta)^{\sigma}u^X(x):=\left\{
\begin{array}{ll}
\displaystyle \int_{\R}\frac{u^X(y)}{|x-y|^{1+2\sigma}}\,\ud y&\textrm{if }-\frac{1}{2}<\sigma<0\\
\\
u^X(x)&\textrm{if }\sigma=0\\
\\
\displaystyle \int_{\R}\frac{u^X(x)-u^X(y)}{|x-y|^{1+2\sigma}}\,\ud y&\textrm{if }0<\sigma<\frac 1 2.
\end{array}
\right.
\end{equation}
In the formula \eqref{fraclap0} above the first integral is meant in the sense of principal value at infinity, i.e., as the limit, as $M\to +\infty$, of the integral on the intervals $(-M,M)$; it is easy to check that, as a consequence of periodicity and of the zero average condition \eqref{zeroaverage},  such a limit exists.  

Lemma \ref{welldef} below clarifies the summability of $(-\Delta)^\sigma u^X$.
\begin{lemma}\label{welldef}
Let $X\in\X^L$. Then,  $(-\Delta)^\sigma u^X\in L^1((0,L))$ for every $-\frac 1 2<\sigma<\frac 1 2$. Moreover, $(-\Delta)^\sigma u^X\in L^\infty(\R)$ whenever $-\frac 1 2<\sigma\le 0$.
\end{lemma}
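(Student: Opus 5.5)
We treat the three regimes of \eqref{fraclap0} separately, using throughout the structure of $u^X$ given by \eqref{misura}–\eqref{zeroaverage}. On each interval between consecutive distinct points of $\sss(X)$ (periodically extended), $u^X$ is affine with slope $1$. It is $L$-periodic, has zero average on $[0,L)$, and is bounded by \eqref{linftyest}. Each point of $\sss(X)$ carries a jump of size at most $L$.

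\emph{Case $\sigma=0$.} There is nothing to prove: $(-\Delta)^0u^X=u^X$ belongs to $L^\infty(\R)$ by \eqref{linftyest}, and hence also to $L^1((0,L))$.

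\emph{Case $-\frac12<\sigma<0$.} Here $1+2\sigma\in(0,1)$, so the kernel $|x-y|^{-1-2\sigma}$ is integrable near the diagonal. For $x\in(0,L)\setminus\sss(X)$ we split $\R$ into the local part $\{|x-y|<L\}$ and the tail $\{|x-y|\ge L\}$.
\begin{itemize}
\item The local part is bounded by $\|u^X\|_\infty\int_{|h|<L}|h|^{-1-2\sigma}\,\ud h$, which is uniform in $x$.
\item For the tail, set $U(y):=\int_x^y u^X\,\ud s$. By periodicity and \eqref{zeroaverage}, $U$ is bounded, with $|U|\le L\|u^X\|_\infty$. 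Integrating by parts on $(x+L,x+M)$ gives
\[
\int_{x+L}^{x+M}\frac{u^X(y)}{(y-x)^{1+2\sigma}}\,\ud y=\Big[\frac{U(y)}{(y-x)^{1+2\sigma}}\Big]_{x+L}^{x+M}+(1+2\sigma)\int_{x+L}^{x+M}\frac{U(y)}{(y-x)^{2+2\sigma}}\,\ud y .
\]
The left-hand side $y<x$ is handled symmetrically.
\end{itemize}
As $M\to\infty$ the boundary term at $x+M$ vanishes and the last integral converges absolutely. This shows that the principal value exists and gives a bound independent of $x$. Hence $(-\Delta)^\sigma u^X$ is bounded on $(0,L)\setminus\sss(X)$. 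Since $u^X$ is periodic, $(-\Delta)^\sigma u^X$ is also periodic, so it lies in $L^\infty(\R)$ and therefore in $L^1((0,L))$.

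\emph{Case $0<\sigma<\frac12$.} This is the main step, because the kernel is now non-integrable at the diagonal and $u^X$ has jumps. Again split into $|x-y|\ge 1$ and $|x-y|<1$.
\begin{itemize}
\item Far region: the integrand is bounded by $2\|u^X\|_\infty|x-y|^{-1-2\sigma}$, which is integrable. This part is uniformly bounded in $x$.
\item Near region: let $d(x):=\operatorname{dist}(x,\sss(X)+L\Z)>0$. For $|x-y|<\min\{d(x),1\}$ there is no jump between $x$ and $y$, so $u^X(x)-u^X(y)=x-y$. This contribution integrates to zero by oddness, in the principal-value sense; in any case it is absolutely bounded by $\int_{|h|<1}|h|^{-2\sigma}\,\ud h<\infty$, since $2\sigma<1$.
\item Remaining region $d(x)\le|x-y|<1$: the integrand is bounded by $2\|u^X\|_\infty|x-y|^{-1-2\sigma}$, giving a contribution of order $C\,d(x)^{-2\sigma}$.
\end{itemize}
Altogether $|(-\Delta)^\sigma u^X(x)|\le C\,(1+d(x)^{-2\sigma})$. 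Because $\sss(X)$ is finite and $2\sigma<1$, the function $d^{-2\sigma}$ is integrable on $(0,L)$: near each of the finitely many singular points it behaves like $|x-\xi|^{-2\sigma}$. This yields $(-\Delta)^\sigma u^X\in L^1((0,L))$. The key point is exactly the condition $\sigma<\frac12$, which makes this integrability work; at $\sigma=\frac12$ one would get a non-integrable $1/d$ singularity.
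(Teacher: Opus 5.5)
Your proof is correct and follows essentially the paper's route. The case $0<\sigma<\frac12$ uses the same splitting at the distance $d(x)$ to the periodized set $\sss(X)+L\Z$, giving $|(-\Delta)^\sigma u^X(x)|\le C(1+d(x)^{-2\sigma})$, which is integrable because $2\sigma<1$. For $-\frac12<\sigma<0$ you exploit the zero average by integrating by parts against the bounded primitive $U$, whereas the paper subtracts $|nL|^{-1-2\sigma}\int_0^L u^X\,\ud y=0$ on each period and applies the Lagrange theorem; both yield the summable decay $|h|^{-2-2\sigma}$, and your version has the small bonus of showing directly that the one-sided improper integrals, and hence the principal value, exist.
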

\begin{proof}
By \eqref{linftyest}, we immediately have  that $(-\Delta)^0u^X\in L^\infty(\R)$.

Let now $-\frac 1 2<\sigma<0$. 
Then, for every $x,y\in (0,L)$ and for every $z\in\Z\setminus[-1,2]$, by  Lagrange Theorem  we have
\begin{equation*}
|x-y+zL|^{-2\sigma-1}-|z L|^{-2\sigma-1}=(-2\sigma-1)|\xi_{x,y} + zL|^{-2\sigma-2}\frac{\xi_{x,y}+ z L}{|\xi_{x,y}+ z L|}(x-y)
\end{equation*}
for some $\xi_{x,y}\in(-|x-y|,|x-y|)$.
Hence, for every $x\in (0,L)$, using \eqref{zeroaverage} and \eqref{linftyest}  
\begin{equation}\label{infifin}
\begin{aligned}
&\,\int_{2L}^{+\infty}\frac{u^X(y)}{|x-y|^{1 + 2\sigma}}\,\ud y=\sum_{n=2}^{+\infty} \int_{0}^{L}\frac{u^X(y)}{|x-y - nL|^{1 + 2 \sigma}}\,\ud y\\
=&\,\sum_{n=2}^{+\infty} \int_{0}^{L} u^X(y) (-2\sigma-1)|\xi_{x,y} - nL|^{-2\sigma-2}\frac{\xi_{x,y} - nL}{|\xi_{x,y} -nL|}(x-y)\,\ud y\le C_L,
\end{aligned}
\end{equation}
for some constant $C_L$ depending only on $L$. 
By the same reasoning,
\begin{equation}\label{infifin2}
\begin{aligned}
\int_{- \infty}^{- L}\frac{u^X(y)}{|x-y|^{1 + 2\sigma}}\,\ud y\le C_L.
\end{aligned}
\end{equation}
Therefore, since also
$$
\int_{-L}^{2L}\frac{u^X(y) }{|x-y|^{1 + 2 \sigma}} \ud y \le C_{L},
$$ 
by \eqref{infifin} and \eqref{infifin2}, we get that $(-\Delta)^\sigma u^X\in L^\infty((0,L))$ and hence, by periodicity,  $(-\Delta)^\sigma u^X\in L^\infty(\R)$.

Finally, we consider the case $0<\sigma<\frac 12$.
 Let $x\in (0,L)\setminus\sss(X)$ and let 
 $$\delta(x):=\min_{\xi\in\sss(X)}\min_{z\in\Z}|x-\xi+Lz|.
 $$
  Then
\begin{equation*}
\begin{aligned}
 |(-\Delta)^{\sigma}u^X(x)|\le&\, \int_{x-\delta(x)}^{x+\delta(x)}\frac{|u^X(x)-u^X(y)|}{|x-y|^{1+2\sigma}}\,\ud y+2\|u\|_{L^\infty} \int_{\R\setminus(x-\delta(x),x+\delta(x))}\frac{\ud y}{|x-y|^{1+2\sigma}} \\
\le&\, \frac{2\delta^{1-2\sigma}(x)}{1-2\sigma}+\frac{2}{\sigma}\|u\|_{L^\infty}\delta^{-2\sigma}(x),
\end{aligned}
\end{equation*}
so that, for every $p<\frac{1}{2\sigma}$,
\begin{equation}\label{contoprec}
\int_{0}^{L}|(-\Delta)^{\sigma}u^X(x)|^p\,\ud x\le C_{\sigma,L}+C_\sigma\|u\|_{L^\infty}\int_{0}^{L}\delta^{-2\sigma p}(x)\,\ud x<+\infty.
\end{equation}
Since $\sigma<\frac 1 2$, \eqref{contoprec} holds true with $p=1$.  
This concludes the proof.
\end{proof}
\begin{remark}
\rm{
In the proof of Lemma \ref{welldef} we never used the zero average assumption \eqref{zeroaverage} for the case $0\le \sigma<\frac 1 2$.
Nevertheless, the condition \eqref{zeroaverage} is crucial for proving the claim in the case
 $-\frac 1 2<\sigma<0$.
 }
 \end{remark}
We define the energy functional $\E^\sigma:\X^L\to\R$ as
\begin{equation}\label{ene}
 \E^{\sigma}(X)=\frac12  \langle u^X,(-\Delta)^\sigma u^X\rangle_{(0,L)},
\end{equation}
where the duality is meant in the sense $L^\infty((0,L))-L^1((0,L))$.
By construction, $\E^\sigma$ is invariant under to global translations, i.e., $\E^\sigma(X+\tau)=\E^{\sigma}(X)$ for every $X\in\X^L$ and for every $\tau\in\R$.
\begin{remark}\label{inverl2}
\rm{In view of Lemma \ref{welldef}, for $-\frac 1 2<\sigma\le 0$, the duality defining $\E^\sigma$ in \eqref{ene} can be understood in the $L^2$-sense. In view of \eqref{contoprec}, this holds true also for $0<\sigma<\frac 1 4$.
}
\end{remark}
The main result of this paper is the following theorem.
\begin{theorem}\label{mainthm}
For every $L\in\N$ and for every $-\frac 1 2<\sigma<\frac 1 2$ the unique, up to global translations and permutations of the indices, minimizer of $\E^\sigma$ in $\X^L$ is given by 
\begin{equation}\label{periocon}
\overline X^L=(0,1,\ldots,L-1).
\end{equation}
\end{theorem}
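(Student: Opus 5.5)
The plan follows the strategy announced in the Introduction. First I will pass to Fourier series on the torus $\R/L\Z$. Since $\mathrm{D}u^X=\ud x-\mu^X$ and $u^X$ has zero average, the Fourier coefficients are
$$\hat u^X(k)=-\frac{L}{2\pi i k}\,\frac{1}{L}\sum_{l=1}^L e^{-2\pi i k x_l/L}\qquad (k\neq0),\qquad \hat u^X(0)=0.$$
Next I will check that $(-\Delta)^\sigma$, as defined in \eqref{fraclap0}, is the Fourier multiplier $c_\sigma|2\pi k/L|^{2\sigma}$ on zero-average periodic functions, with a positive constant $c_\sigma$ in each of the three regimes. For $\sigma<0$ this uses the principal value at infinity, and the sign of the Riesz kernel transform is positive. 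Parseval then gives
$$\E^\sigma(X)=C_{\sigma,L}\sum_{k\neq0}|k|^{2\sigma-2}\Big|\sum_{l}e^{2\pi i k x_l/L}\Big|^2=C_{\sigma,L}\Big(L\,S_0+\sum_{l\neq m}W(x_l-x_m)\Big),$$
with $S_0=\sum_{k\neq0}|k|^{2\sigma-2}<\infty$ and $W(\theta)=\sum_{k\neq 0}|k|^{2\sigma-2}\cos(2\pi k\theta/L)$. This series converges absolutely because $2-2\sigma>1$. Thus $\E^\sigma$ is a pairwise interaction energy of $L$ points on the circle, and $W$ is finite and continuous even at collisions. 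The pairing in \eqref{ene} must be justified in the $L^\infty$--$L^1$ sense; I will do this by approximating with truncated Fourier sums and using Lemma \ref{welldef}.

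Second, I will write $W$ as a function of the squared chordal distance $r=|p_l-p_m|^2=2-2\cos(2\pi\theta/L)$ between the points $p_l=e^{2\pi i x_l/L}$ on $\mathbb S^1$. I will use the subordination identity
$$|k|^{2\sigma-2}=\frac{1}{\Gamma(1-\sigma)}\int_0^\infty t^{-\sigma}e^{-t|k|}\,\ud t,$$
together with the Poisson kernel sum
$$\sum_{k\neq0}e^{-t|k|}\cos(k\phi)=\frac{1-e^{-2t}}{1-2e^{-t}\cos\phi+e^{-2t}}-1.$$
Writing $q=e^{-t}$, the denominator equals $(1-q)^2+q\,r$. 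Hence
$$W=\int_0^\infty c\,t^{-\sigma}\big(V_t(r)-1\big)\,\ud t,\qquad V_t(r)=\frac{1-q^2}{(1-q)^2+q r}.$$
For each $t>0$, the function $V_t$ is strictly completely monotone in $r$ on $[0,4]$, since it has the form $a/(b+cr)$ with $a,b,c>0$, and it is finite at $r=0$. The constant $-1$ does not affect minimization because the number of pairs is fixed.

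Third, I will apply the Cohn--Kumar Universality Theorem. The regular $L$-gon is a sharp configuration on $\mathbb S^1$ and is therefore universally optimal. So for each $t$ it minimizes $\sum_{l\neq m}V_t(|p_l-p_m|^2)$ over all $L$-point configurations, including those with coincident points: these are limits of distinct configurations and $V_t$ is continuous. Moreover, among configurations of distinct points it is the unique minimizer up to rotation. Integrating against the positive weight $t^{-\sigma}$, justified by Tonelli after adding a constant, shows that $\overline X^L$ minimizes $\E^\sigma$.

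The main obstacle is uniqueness when collisions are allowed. Suppose $X$ is another minimizer. Since $\overline X^L$ minimizes the $V_t$-energy for every $t$ and the energies integrate to equal totals, $X$ must also minimize the $V_t$-energy for almost every $t$. As $t\to0^+$, a colliding pair contributes $V_t(0)=\frac{1+q}{1-q}\sim 2/t$. The regular $L$-gon instead has bounded $V_t$-energy, since $V_t(r)\to 4/r<\infty$ for $r>0$. So for small $t$ a configuration with a collision cannot be optimal; hence $X\in\rs^L$. Uniqueness in Cohn--Kumar for distinct points then gives that $X$ is a rotation of the $L$-gon, that is, a translate and permutation of $\overline X^L$.
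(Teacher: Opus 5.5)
Your proposal follows the paper's own proof: the Fourier representation of Proposition~\ref{persigma}, subordination combined with the Poisson kernel to obtain the strictly completely monotone potentials $V_t$, Cohn--Kumar for each $t>0$, and exclusion of collisions from the blow-up of $V_t(0)=\frac{1+e^{-t}}{1-e^{-t}}$ as $t\to0^+$. Your truncated-Fourier-sum justification of the $L^\infty$--$L^1$ pairing is a workable substitute for the paper's mollification. There are two computational slips to fix; neither affects the logic.

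First, your subordination identity is wrong as written. One has $\frac{1}{\Gamma(1-\sigma)}\int_0^\infty t^{-\sigma}e^{-t|k|}\,\ud t=|k|^{\sigma-1}$, not $|k|^{2\sigma-2}$. With $e^{-t|k|}$ you need $\beta=2-2\sigma$ in \eqref{gammafun2}, that is, $|k|^{2\sigma-2}=\frac{1}{\Gamma(2-2\sigma)}\int_0^\infty t^{1-2\sigma}e^{-t|k|}\,\ud t$. This is not purely cosmetic. With weight $t^{-\sigma}$, the diagonal contribution $\int_0 t^{-\sigma}\big(V_t(0)-1\big)\,\ud t$ diverges for $\sigma\ge0$, since $V_t(0)-1\sim 2/t$. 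With weight $t^{1-2\sigma}$ it behaves like $\int_0 2t^{-2\sigma}\,\ud t$, which is finite exactly because $\sigma<\frac12$. Afterwards only the positivity of the weight is used, so the rest of your argument is unchanged.

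Second, for $r>0$ one has $V_t(r)\to0$ as $t\to0^+$, not $4/r$, because the numerator $1-e^{-2t}$ vanishes. The paper normalizes by $(1-e^{-2t})^{-1}$ to get the limit $1/\zeta$. Either way $\G_t(\overline X^L)$ stays bounded while a configuration with a collision has $\G_t\ge 2V_t(0)\to+\infty$, so your collision argument goes through.
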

\begin{remark}
\rm{
It is easy to check that
\begin{equation}\label{fraclapi}
\E^\sigma(X)=\left\{
\begin{array}{ll}
\displaystyle  \frac 1 2\int_{0}^{L}\ud x\int_{\R}\frac{u^X(x)u^X(y)}{|x-y|^{1+2\sigma}}\,\ud y&\textrm{if }-\frac{1}{2}<\sigma<0\\
\\
\displaystyle \frac 1 2 \|u^X\|^2_{L^2((0,L))}&\textrm{if }\sigma=0\\
\\
\displaystyle \frac 1 4\int_{0}^{L}\,\ud x\int_{\R}\frac{|u^X(x)-u^X(y)|^2}{|x-y|^{1+2\sigma}}\,\ud y&\textrm{if }0<\sigma<\frac 1 2.
\end{array}
\right.
\end{equation}
We notice that first variations of the energy functionals $\E^\sigma$ with respect to $u$ coincide with the Laplacian operators defined in \eqref{fraclap0}. 
}
\end{remark}
%
\section{Fourier approach}
This section is devoted to the proof of Proposition \ref{persigma} below, which consists in rewriting the energy $\E^\sigma(X)$ as a function of the mutual (periodic) distances between the points in $\sss(X)$.
\begin{proposition}\label{persigma}
Let $-\frac 1 2 <\sigma<\frac 1 2$. Then, for every $X\in\X^L$,
\begin{equation}\label{risfou}
\E^\sigma(X)=C_{\sigma,L}\sum_{k\in\Z\setminus\{0\}}|k|^{2\sigma-2}\sum_{l,l'=1}^{L}\cos\Big(\frac{2\pi}{L}k\big(x_l-x_{l'}\big)\Big),
\end{equation}
where 
\begin{equation}\label{csenzcapp}
C_{\sigma,L}:=\frac{L}{8\pi^2}  \widehat C_{\sigma,L}
\end{equation}
and
\begin{equation}\label{ccapp}
\widehat C_{\sigma,L}:=\left\{
\begin{array}{ll}
 \displaystyle \frac{\pi^{2\sigma+\frac 1 2}}{L^{2\sigma}}\frac{\Gamma(-\sigma)}{\Gamma\big(\sigma+\frac 1 2\big)}   &\textrm{if }-\frac 1 2<\sigma<0,\\
 \displaystyle1&\textrm{if }\sigma=0,\\
 \displaystyle \frac{2^{2\sigma}\pi^{2\sigma}}{L^{2\sigma}}\frac{1}{\sigma}\Gamma(1-2\sigma)\cos(\pi\sigma)  &\textrm{if }0<\sigma<\frac 1 2.
\end{array}
\right.
\end{equation} 
\end{proposition}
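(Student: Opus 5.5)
The plan is to expand $u^X$ in Fourier series on $[0,L)$ and to show that each operator in \eqref{fraclap0} acts diagonally on the exponentials $e^{2\pi i k x/L}$, with a multiplier proportional to $|k|^{2\sigma}$.

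\textbf{Fourier coefficients of $u^X$.} Write $u^X(x)=\sum_{k\neq 0}\hat u_k e^{2\pi i kx/L}$. There is no $k=0$ mode because of \eqref{zeroaverage}. Since $u^X\in BV$ is bounded, the series converges in $L^2((0,L))$. Testing \eqref{misura} against $e^{-2\pi i kx/L}$ on one period, the Lebesgue part contributes nothing for $k\neq0$. This gives
$$\frac{2\pi i k}{L}\,L\,\hat u_k=-\sum_{l=1}^L e^{-2\pi i k x_l/L},\qquad\text{hence}\qquad |\hat u_k|^2=\frac{1}{4\pi^2k^2}\sum_{l,l'=1}^L\cos\Big(\frac{2\pi}{L}k(x_l-x_{l'})\Big).$$

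\textbf{Multipliers.} Next I compute the symbol $m_\sigma(k)$, defined by $(-\Delta)^\sigma e^{2\pi ikx/L}=m_\sigma(k)e^{2\pi ikx/L}$.

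For $\sigma=0$ the operator is the identity, so $m_0(k)=1$.

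For $0<\sigma<\frac12$ we have
$$m_\sigma(k)=\int_\R\frac{1-\cos(\xi y)}{|y|^{1+2\sigma}}\,\ud y,\qquad \xi=\frac{2\pi k}{L}.$$
Scaling gives $m_\sigma(k)=|\xi|^{2\sigma}\int_\R\frac{1-\cos t}{|t|^{1+2\sigma}}\,\ud t$. An integration by parts followed by the classical formula $\int_0^\infty t^{-2\sigma}\sin t\,\ud t=\Gamma(1-2\sigma)\cos(\pi\sigma)$ yields the constant $\frac1\sigma\Gamma(1-2\sigma)\cos(\pi\sigma)$. Together with $|\xi|^{2\sigma}=(2\pi)^{2\sigma}L^{-2\sigma}|k|^{2\sigma}$, this matches \eqref{ccapp}.

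For $-\frac12<\sigma<0$ the integral $\int_\R\frac{\cos(\xi y)}{|y|^{1+2\sigma}}\,\ud y$ is conditionally convergent, and it is taken in the principal value sense at infinity as in \eqref{fraclap0}. It equals $|\xi|^{2\sigma}\cdot 2\Gamma(-2\sigma)\cos(\pi(-2\sigma)/2)$, i.e.\ $2\Gamma(-2\sigma)\sin(\pi\sigma)\cdot(-1)$ up to sign bookkeeping. The duplication and reflection formulas then turn this into $\pi^{1/2}\frac{\Gamma(-\sigma)}{\Gamma(\sigma+\frac12)}2^{2\sigma}\cdots$. Combined with the factor $(2\pi/L)^{2\sigma}$, this should reproduce $\frac{\pi^{2\sigma+1/2}}{L^{2\sigma}}\frac{\Gamma(-\sigma)}{\Gamma(\sigma+1/2)}$. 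I expect this Gamma-function bookkeeping to be routine but error-prone; I will verify it with the Fourier transform of $|y|^{-1-2\sigma}$ as a tempered distribution.

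\textbf{Assembling the energy.} Parseval on $(0,L)$ then gives
$$\E^\sigma(X)=\frac12\langle u^X,(-\Delta)^\sigma u^X\rangle=\frac L2\sum_{k\neq0}m_\sigma(k)|\hat u_k|^2.$$
Inserting $|\hat u_k|^2$ and $m_\sigma(k)=\widehat C_{\sigma,L}|k|^{2\sigma}$ yields exactly \eqref{risfou} with $C_{\sigma,L}=\frac{L}{8\pi^2}\widehat C_{\sigma,L}$.

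\textbf{Main obstacle.} The main difficulty is justifying termwise application of $(-\Delta)^\sigma$ to the Fourier series, together with the pairing.

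For $\sigma\le 0$, Lemma~\ref{welldef} and Remark~\ref{inverl2} allow the pairing to be understood in $L^2$. The negative case is cleanest after decomposing the kernel. Periodize it as
$$K_L(x)=\sum_z|x+zL|^{-1-2\sigma},$$
renormalized by subtracting constants, which is harmless because $u^X$ has zero mean. One checks that $K_L$ is integrable on $(0,L)$ and that its Fourier coefficients are $m_\sigma(k)/L$. Then $(-\Delta)^\sigma u^X=K_L*u^X$ on the torus, and Parseval for convolutions applies.

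For $\sigma>0$ I would instead use the form \eqref{fraclapi}. By periodicity, $\int_0^L\int_\R\frac{|u(x)-u(y)|^2}{|x-y|^{1+2\sigma}}$ equals $\int_\R\frac{\|u(\cdot+h)-u\|^2_{L^2(0,L)}}{|h|^{1+2\sigma}}\,\ud h$. Expanding $\|u(\cdot+h)-u\|^2$ by Parseval as $L\sum_k|\hat u_k|^2\,2(1-\cos(2\pi kh/L))$ and applying Tonelli, since all terms are nonnegative, gives the result with no convergence issues. The convergence of the final sum is also guaranteed, because $|k|^{2\sigma-2}$ is summable for $\sigma<\frac12$.
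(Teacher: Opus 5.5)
Your proposal is correct, and its skeleton is the paper's: Fourier coefficients of $u^X$ from \eqref{misura}, the multiplier $\widehat C_{\sigma,L}|k|^{2\sigma}$, and Parseval. The difference is in how the analytic issues are handled.

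For $0<\sigma<\frac12$, the paper computes $\F[(-\Delta)^\sigma u^X](k)$ from the second-difference form. It applies $L^2$-Parseval only for $\sigma<\frac14$, where \eqref{contoprec} gives $(-\Delta)^\sigma u^X\in L^2$, and reaches $\frac14\le\sigma<\frac12$ by mollifying and passing to the limit. Your route goes through the Gagliardo form, $\E^\sigma(X)=\frac14\int_\R|h|^{-1-2\sigma}\|u^X(\cdot+h)-u^X\|^2_{L^2(0,L)}\,\ud h$, followed by Parseval in $x$ and Tonelli. It covers the whole range at once and never needs $(-\Delta)^\sigma u^X\in L^2$.

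The price is that you rely on \eqref{fraclapi}, which the paper only asserts. To close this, note that $\int_0^L|u^X(x+h)-u^X(x)|\,\ud x\le 2L\min\{|h|,\|u^X\|_\infty\}$, since the total variation of $\ud x-\mu^X$ per period is $2L$. Hence Fubini applies to $u^X(x)\big(u^X(x)-u^X(x+h)\big)|h|^{-1-2\sigma}$ on $(0,L)\times\R$. Periodicity then gives $\int_0^L u^X(x)\big(u^X(x)-u^X(x+h)\big)\,\ud x=\frac12\|u^X(\cdot+h)-u^X\|^2_{L^2(0,L)}$.

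For $-\frac12<\sigma<0$, your renormalized periodized kernel is actually more careful than the paper. The paper interchanges the $x$-integral with an integral that converges only as a principal value at infinity. On the other hand, the paper's Gaussian subordination \eqref{gammafun2} produces $\Gamma(-\sigma)/\Gamma(\sigma+\frac12)$ directly, whereas your cosine-integral route needs duplication and reflection.

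In that step you have a slip that would give the wrong constant if followed literally. The correct value is $\int_\R\cos(\xi y)|y|^{-1-2\sigma}\,\ud y=2\Gamma(-2\sigma)\cos(\pi\sigma)|\xi|^{2\sigma}$, which is not $-2\Gamma(-2\sigma)\sin(\pi\sigma)|\xi|^{2\sigma}$. Keep the cosine and use two identities:
the duplication formula $\Gamma(-2\sigma)=\pi^{-1/2}2^{-2\sigma-1}\Gamma(-\sigma)\Gamma(\frac12-\sigma)$, and the reflection formula $\Gamma(\frac12-\sigma)\Gamma(\frac12+\sigma)=\pi/\cos(\pi\sigma)$.
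Together they give $2\Gamma(-2\sigma)\cos(\pi\sigma)=2^{-2\sigma}\sqrt{\pi}\,\Gamma(-\sigma)/\Gamma(\sigma+\frac12)$. Note the factor is $2^{-2\sigma}$, not $2^{2\sigma}$. Multiplying by $(2\pi/L)^{2\sigma}$ then returns exactly $\widehat C_{\sigma,L}$.
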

Here and below the symbol $\Gamma$ denotes the
Euler $\Gamma$-function  defined for every complex number $\mathsf{z}\in\C$ with $\mathfrak{Re}[\mathsf{z}]>0$ as
\begin{equation}\label{gammafun}
\Gamma(\mathsf{z}):=\int_{0}^{+\infty}\tau^{\mathsf{z}-1}e^{-\tau}\,\ud \tau.
\end{equation}
In order to prove Proposition \ref{persigma}, we introduce some basic notions of Fourier analysis. Let $\mu$ be a scalar Radon measure on $\R$ that is $L$-periodic, i.e., such that $\mu(I+L)=\mu(I)$ for every interval $I$.
We can define the Fourier coefficients $\F[\mu](k)$ of $\mu$ as
\begin{equation}\label{fouru}
\F[\mu](k):=\langle \phi_k,\mu\res[0,L)\rangle=\int_{0}^L\phi_k \,\ud \mu,
\end{equation}
where $\phi_k(x):=L^{-\frac 1 2}e^{-\frac{2\pi}{L}ikx}$ for every $k\in\Z$.

In Lemma \ref{fourfraclap} below, we determine the Fourier coefficients of $(-\Delta)^\sigma u^X$, that, in view of its definition \eqref{fraclap0} and of Lemma \ref{welldef}, is an $L$-periodic function in $L^1_{\loc}(\R)$.
\begin{lemma}\label{fourfraclap}
Let $-\frac 1 2<\sigma<\frac 1 2$.
For every $X\in \X^L$ we have that
\begin{equation}\label{trasfdelta}
\F[(-\Delta)^\sigma u^X](k)=\widehat C_{\sigma,L} |k|^{2\sigma}\F[u^X](k)\qquad\textrm{for every }k\in\Z\setminus\{0\},
\end{equation}
where $\widehat C_{\sigma,L}$ is the constant defined in \eqref{ccapp}.

Moreover,
\begin{equation}\label{trasf0<}
\F[(-\Delta)^\sigma u^X](0)=0.
\end{equation}
\end{lemma}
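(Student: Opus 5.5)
Since $(-\Delta)^\sigma$ commutes with translations, the plan is to compute its action on each Fourier mode $e^{\frac{2\pi}{L}ikx}$ directly: the symbol of each mode is a one-dimensional integral. For $k\neq0$ set $\omega:=\frac{2\pi}{L}|k|$. We treat the three ranges of $\sigma$ separately.

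\emph{Case $\sigma=0$.} Here $(-\Delta)^0u^X=u^X$ and $\widehat C_{0,L}=1$, so \eqref{trasfdelta} is immediate. Moreover \eqref{trasf0<} is exactly \eqref{zeroaverage}.

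\emph{Case $0<\sigma<\frac12$.} We first write
$$
(-\Delta)^\sigma u^X(x)=\int_\R\frac{u^X(x)-u^X(x+h)}{|h|^{1+2\sigma}}\,\ud h .
$$
Next we multiply by $\phi_k(x)$, integrate over $(0,L)$, and use Fubini. Fubini is justified by the bound $\int_0^L|u^X(x)-u^X(x+h)|\,\ud x\le C\min(|h|,1)$, valid because $u^X$ is $L$-periodic and $BV_{\loc}$. Periodicity then gives $\int_0^L u^X(x+h)\phi_k(x)\,\ud x=e^{\frac{2\pi}{L}ikh}\F[u^X](k)$. Hence
$$
\F[(-\Delta)^\sigma u^X](k)=\F[u^X](k)\int_\R\frac{1-\cos(\omega h)}{|h|^{1+2\sigma}}\,\ud h=\omega^{2\sigma}\F[u^X](k)\int_\R\frac{1-\cos t}{|t|^{1+2\sigma}}\,\ud t ,
$$
where the sine part vanishes by oddness. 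The remaining integral is computed by integrating by parts to $\frac1\sigma\int_0^\infty t^{-2\sigma}\sin t\,\ud t$. The classical formula $\int_0^\infty t^{s-1}\sin t\,\ud t=\Gamma(s)\sin(\pi s/2)$, with $s=1-2\sigma$, turns this into $\frac1\sigma\Gamma(1-2\sigma)\cos(\pi\sigma)$. Together with the factor $\omega^{2\sigma}=(2\pi)^{2\sigma}L^{-2\sigma}|k|^{2\sigma}$, this is exactly $\widehat C_{\sigma,L}$.

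For $k=0$, the same computation gives the integral of $u^X(x)-u^X(x+h)$ over a period. This vanishes by periodicity, which proves \eqref{trasf0<}.

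\emph{Case $-\frac12<\sigma<0$.} This is the delicate case, because the kernel $|h|^{-1-2\sigma}$ is not integrable at infinity and the integral exists only as a principal value. The plan is to write $u^X=\sum_{k\neq0}\F[u^X](k)\overline{\phi_k}$. There is no zero mode, by \eqref{zeroaverage}. Since $u^X$ has jumps, $\F[u^X](k)=O(1/|k|)$, and the series converges in $L^2$.

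It is safer to avoid exchanging the series with the principal value. Instead we would compute directly:
$$
\int_0^L\phi_k(x)\int_{-M}^{M}\frac{u^X(x+h)}{|h|^{1+2\sigma}}\,\ud h\,\ud x=\F[u^X](k)\int_{-M}^{M}\frac{e^{\frac{2\pi}{L}ikh}}{|h|^{1+2\sigma}}\,\ud h .
$$
Fubini applies here because on the truncated domain the kernel is integrable: near $0$ because $1+2\sigma<1$, and the domain is bounded. We then let $M\to\infty$.

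On the right-hand side, the integral converges to the improper integral $2\omega^{2\sigma}\int_0^\infty t^{-1-2\sigma}\cos t\,\ud t=2\omega^{2\sigma}\Gamma(-2\sigma)\cos(\pi\sigma)$, with convergence at infinity by oscillation. On the left-hand side, we pass to the limit under $\int_0^L$ using the uniform bounds from the proof of Lemma \ref{welldef}, estimates \eqref{infifin}–\eqref{infifin2}. Those bounds apply to the truncated integrals uniformly in $M$ (up to a bounded remainder over one period), so dominated convergence applies.

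Finally, the Legendre duplication and reflection formulas convert $2\pi^{2\sigma}\Gamma(-2\sigma)\cos(\pi\sigma)2^{2\sigma}$ into $\pi^{2\sigma+\frac12}\Gamma(-\sigma)/\Gamma(\sigma+\frac12)$. For $k=0$, the same limit equals $\lim_M\int_{-M}^M|h|^{-1-2\sigma}\big(\int_0^Lu^X(x+h)\,\ud x\big)\,\ud h$, and this is $0$ because each inner integral vanishes by \eqref{zeroaverage}.

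The main obstacle is the justification of the $M\to\infty$ limit in the negative case, namely the uniform domination. The Gamma-function identities are routine.
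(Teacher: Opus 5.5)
Your proof is correct and follows the paper's skeleton: Fubini plus periodicity to factor out $\F[u^X](k)$, then evaluation of a scalar symbol. For $0<\sigma<\frac12$ it is essentially the paper's argument. You use the one-sided difference with the explicit bound $\int_0^L|u^X(x)-u^X(x+h)|\,\ud x\le C\min(|h|,1)$, where the paper uses the symmetrized form \eqref{nopv}.

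The genuine differences are in the case $-\frac12<\sigma<0$, and they are of two kinds.

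\emph{Rigor of the interchange.} The paper exchanges the integrals in \eqref{perknon0}, and again in \eqref{pri}, without further comment. Neither exchange is absolutely convergent, since $|z|^{-1-2\sigma}$ is not integrable at infinity. Your argument makes this step rigorous: you truncate at $|h|\le M$, apply Fubini exactly on the truncated domain, and pass to the limit by dominated convergence. The domination comes from the bounds \eqref{infifin}--\eqref{infifin2}, which are uniform in $M$ thanks to the cancellation over each period. You should add one line to close it: truncating in $h=y-x$ rather than in $y$ changes the integral only by $O(M^{-1-2\sigma})$. Hence your limit coincides with the paper's principal value.

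\emph{Evaluation of the constant.} The paper uses Gaussian subordination \eqref{gammafun2} and the Fourier transform of a Gaussian. This yields $\Gamma(-\sigma)/\Gamma(\sigma+\frac12)$ directly, with no special-function identities. You use instead the cosine formula $\int_0^\infty t^{s-1}\cos t\,\ud t=\Gamma(s)\cos(\pi s/2)$ with $s=-2\sigma$. This mirrors the sine formula the paper uses for $\sigma>0$, but it costs you Legendre duplication and Euler reflection. I checked that $2^{1+2\sigma}\Gamma(-2\sigma)\cos(\pi\sigma)=\sqrt{\pi}\,\Gamma(-\sigma)/\Gamma(\sigma+\frac12)$, so the two constants agree.
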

\begin{proof}
For $\sigma=0$, the claims \eqref{trasfdelta} and \eqref{trasf0<} are trivial consequences of \eqref{fraclap0} and \eqref{zeroaverage}.
\vskip5pt
Now we discuss the case $-\frac 1 2<\sigma<0$. For $k=0$, we have
 \begin{equation}\label{perk0}
\F[(-\Delta)^\sigma u^X](0)=L^{-\frac 1 2}\int_{0}^{L}\ud x\int_{\R}\frac{u^X(y)}{|x-y|^{1+2\sigma}}\,\ud y=L^{-\frac 1 2}\int_{\R}\frac{\ud z}{|z|^{1+2\sigma}}\int_{0}^{L}u^{X}(x+z)\,\ud x=0,
\end{equation}
where in the second equality we used the change of variable $z=y-x$ and exchanged the integrals, and the last equality is a consequence of \eqref{zeroaverage}.
This proves \eqref{trasf0<} for $-\frac 1 2<\sigma<0$.

Let now $k\in\Z\setminus\{0\}$.  In analogy with \eqref{perk0}, using (twice) the change of variable $z=y-x$, we have
\begin{equation}\label{perknon0}
\begin{aligned}
\F[(-\Delta)^\sigma u^X](k)=&\,L^{-\frac 1 2}\int_{0}^{L}\ud x\, e^{-\frac{2\pi}{L}ikx}\int_{\R}\frac{u^X(y)}{|x-y|^{1+2\sigma}}\,\ud y\\
=&\,L^{-\frac 1 2}\int_{\R}\ud z\, \frac{e^{\frac{2\pi}{L}ikz}}{|z|^{1+2\sigma}}\int_{0}^{L}u^X(x+z)e^{-\frac{2\pi}{L}i k(x+z)}\,\ud x\\
=&\,L^{-\frac 1 2}\int_{\R}\ud z\,\frac{e^{\frac{2\pi}{L}ikz}}{|z|^{1+2\sigma}}\int_{z}^{L+z}u^X(y)e^{-\frac{2\pi}{L}i ky}\,\ud y\\
=&\,\F[u^X](k)\int_{\R} \frac{e^{\frac{2\pi}{L}ikz}}{|z|^{1+2\sigma}}\,\ud z.
\end{aligned}
\end{equation}
Therefore, \eqref{trasfdelta}, for $-\frac 1 2<\sigma<0$, is a consequence of the following identity 
\begin{equation}\label{fourK}
\int_{\R} \frac{e^{\frac{2\pi}{L}ikz}}{|z|^{1+2\sigma}}\,\ud z=
\frac{\pi^{2\sigma+\frac 1 2}}{L^{2\sigma}}\frac{\Gamma(-\sigma)}{\Gamma\big(\sigma+\frac 1 2\big)}
|k|^{2\sigma},
\end{equation}
which is a well-established fact in Fourier analysis.
For the reader's convenience, we provide here its short proof.
To prove \eqref{fourK}, we preliminarily notice that, for every $u,\beta>0$, using the change of variable $\tau=ut$ in \eqref{gammafun}, we obtain
\begin{equation}\label{gammafun2}
u^{-\beta}=\frac{1}{\Gamma(\beta)}\int_{0}^{+\infty}t^{\beta-1}e^{-ut}\,\ud t. 
\end{equation}
Applying \eqref{gammafun2} with $\beta=\sigma+\frac{1}{2}$ and $u=z^2$ and exchanging the order of integrals, we have
\begin{equation}\label{pri}
\begin{aligned}
\int_{\R} \frac{e^{\frac{2\pi}{L}ikz}}{|z|^{1+2\sigma}}\,\ud z
=&\,\frac{1}{\Gamma\big(\sigma+\frac 1 2\big)}\int_{0}^{+\infty}\ud t \,t^{\sigma-\frac 1 2}\int_{\R}e^{-tz^2+\frac{2\pi}{L}ikz}\,\ud z\\
=&\,\frac{1}{\Gamma\big(\sigma+\frac 1 2\big)}\int_{0}^{+\infty}\ud t \,t^{\sigma-\frac 1 2} e^{-\frac{\pi^2}{L^2}\frac{k^2}{t}}\int_{\R}e^{-\big(z\sqrt t-i\frac{\pi}{L}\frac{k}{\sqrt t}\big)^2}\,\ud z.
\end{aligned}
\end{equation} 
Now, by the change of variable $w=z\sqrt t-i\frac{\pi}{L}\frac{k}{\sqrt t}$, we have that
\begin{equation*}
\int_{\R}e^{-\big(z\sqrt t-i\frac{\pi}{L}\frac{k}{\sqrt t}\big)^2}\,\ud z=t^{-\frac 1 2}\int_{\R}e^{-w^2}\,\ud w=\sqrt{\pi} t^{-\frac 12}, 
\end{equation*}
which, replaced in the righthand side of \eqref{pri}, yields
\begin{equation*}
\begin{aligned}
\int_{\R} \frac{e^{\frac{2\pi}{L}ikz}}{|z|^{1+2\sigma}}\,\ud z=&\,\frac{\sqrt{\pi}}{\Gamma\big(\sigma+\frac 1 2\big)}\int_{0}^{+\infty}t^{\sigma-1} e^{-\frac{\pi^2}{L^2}\frac{k^2}{t}}\,\ud t\\
=&\,\frac{\sqrt{\pi}}{\Gamma\big(\sigma+\frac 1 2\big)}\frac{\pi^{2\sigma}}{L^{2\sigma}}|k|^{2\sigma}\int_{0}^{+\infty}\tau^{-\sigma-1} e^{-\tau}\,\ud \tau\\
=&\,\frac{\pi^{2\sigma+\frac 1 2}}{L^{2\sigma}}\frac{\Gamma(-\sigma)}{\Gamma\big(\sigma+\frac 1 2\big)}
|k|^{2\sigma},
\end{aligned}
\end{equation*}
where last but one equality follows from the change of variable $\tau=\frac{\pi^2}{L^2}\frac{k^2}{t}$ and the last one is an immediate consequence of the definition in \eqref{gammafun} (applied with $\mathsf z=-\sigma$).
This concludes the proof of \eqref{fourK} and, in turns, that of \eqref{trasfdelta} in the case $-\frac 12 <\sigma<0$.
\vskip5pt
We conclude by discussing the case $0<\sigma<\frac 12$.
To this end, we notice that, for every $x\in(0,L)\setminus\sss(X)$
\begin{equation}\label{nopv}
(-\Delta)^\sigma u^X(x)= \frac12 \int_{\R}\frac{2u^X(x)-u^X(x+z)-u^X(x-z)}{|z|^{1+2\sigma}}\,\ud z.
\end{equation}
Hence, in particular, by exchanging the order of integrals and using the periodicity of $u^X$, 
we have
\begin{equation*}
\F[(-\Delta)^\sigma u^X](0)= \frac{L^{-\frac 1 2}}{2}  \int_{\R}\frac{\ud z}{|z|^{1+2\sigma}}\int_{0}^{L}\big(2u^X(x)-u^{X}(x+z)-u^{X}(x-z)\big)\ud x=0,
\end{equation*}
which proves \eqref{trasf0<} also in this case.
Moreover, by \eqref{nopv}, for every $k\in\Z\setminus\{0\}$, we have
\begin{equation}\label{mancagamma}
\begin{aligned}
\F[(-\Delta)^\sigma u^X](k)=&\, \frac{L^{-\frac 1 2}}{2}
\int_{0}^{L}\ud x \,e^{-\frac{2\pi}{L}ikx}
\int_{\R}\frac{2u^X(x)-u^X(x+z)-u^X(x-z)}{|z|^{1+2\sigma}}\,\ud z
\\
=&\, \frac{L^{-\frac 1 2}}{2}\int_{\R}\frac{\ud z}{|z|^{1+2\sigma}}\int_{0}^{L}(2u^X(x)-u^X(x+z)-u^X(x-z)) e^{-\frac{2\pi}{L}ikx}\,\ud x\\
=&\,\F[u^X](k) \frac12 \int_{\R}\frac{2-e^{\frac{2\pi}{L}ikz}-e^{-\frac{2\pi}{L}ikz}}{|z|^{1+2\sigma}} \,\ud z\\
=&\,\F[u^X](k) \int_{\R}\frac{1-\cos\big(\frac{2\pi}{L}kz)}{|z|^{1+2\sigma}} \,\ud z,
\end{aligned}
\end{equation}
where the second equality follows by exchanging the order of integrals and the third one is a consequence of the change of variable $y=x\pm z$ that allows to deduce 
\begin{equation*}
L^{-\frac 1 2}\int_{0}^{L}u(x\pm z)e^{-\frac{2\pi}{L}ikx}\,\ud x=e^{\pm\frac{2\pi}{L}ikz}\F[u^X](k).
\end{equation*}
Now, by the change of variable $t=\frac{2\pi}{L}kz$, integrating by parts, we have
\begin{equation}\label{pergamma}
\begin{aligned}
\int_{\R}\frac{1-\cos\big(\frac{2\pi}{L}kz\big)}{|z|^{1+2\sigma}} \,\ud z=&\,\Big(\frac{2\pi}{L}\Big)^{2\sigma}|k|^{2\sigma}\int_{\R}\frac{1-\cos t}{|t|^{1+2\sigma}}\,\ud t\\
=&\,\frac{2^{2\sigma}\pi^{2\sigma}}{L^{2\sigma}}|k|^{2\sigma}\frac{1}{\sigma}\int_{0}^{+\infty}t^{-2\sigma}\sin t\,\ud t\\
=&\,\frac{2^{2\sigma}\pi^{2\sigma}}{L^{2\sigma}}\frac{1}{\sigma}\Gamma(1-2\sigma)\cos(\pi\sigma)|k|^{2\sigma},
\end{aligned}
\end{equation}
where in the last equality we applied the classical formula (see \cite[formula 3.761.4]{GR})
\begin{equation*}
\int_{0}^{+\infty}x^{\alpha-1}\sin x\,\ud x=\Gamma(\alpha)\sin\big(\frac{\pi\alpha}{2}\big)\qquad\textrm{for }0<\mathfrak{Re}[\alpha]<1,
\end{equation*}
with $\alpha=1-2\sigma\in(0,1)$.
By inserting \eqref{pergamma} in \eqref{mancagamma}
we obtain \eqref{trasfdelta} also for $0<\sigma<\frac 1 2$. The proof is concluded.
\end{proof}
\begin{remark}
\rm{
We notice that, for $0<\sigma < \frac12$,  the proof of formula \eqref{trasfdelta} works actually for every locally Lipschitz $L$-periodic function $u\in L^\infty(\R)$ whose fractional Laplacian (defined as in \eqref{fraclap0}) is in $L^1_{\loc}(\R)$.
Moreover, since $\lim_{\mathsf z\to 0^+}\mathsf{z}\Gamma(\mathsf z)=1$, $\Gamma(\frac 1 2)=\sqrt{\pi}$ and $\Gamma(1)=1$, we have that
\begin{equation*}
\lim_{\sigma\to 0}   |\sigma|\widehat C_{\sigma,L}= 1 ,
\end{equation*}
which, in view of \eqref{trasfdelta}, implies 
\begin{equation*}
\lim_{\sigma\to 0}   |\sigma|\F[(-\Delta)^\sigma u^X](k)=  \F[(-\Delta)^0 u^X](k)\qquad\textrm{for every }k\in\Z\setminus\{0\}.
\end{equation*}
}
\end{remark}
Now we compute the Fourier coefficients of $u^X$.
\begin{lemma}\label{fourieru}
Let $L\in\N$. For every $X\in\X^L$ we have that $\F[u^X](0)\equiv 0$ and
\begin{equation*}
\F[u^X](k)=i\frac{L^{\frac 1 2}}{2\pi k}\sum_{l=1}^{L}e^{-\frac{2\pi}{L}ikx_l} \qquad\textrm{for every }k\in\Z\setminus\{0\} .
\end{equation*}
\end{lemma}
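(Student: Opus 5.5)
The zeroth coefficient vanishes immediately. Since $\phi_0\equiv L^{-\frac12}$, we get $\F[u^X](0)=L^{-\frac12}\int_0^L u^X\,\ud x=0$ by the zero-average condition \eqref{zeroaverage}.

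For $k\neq 0$ the plan is to integrate by parts on $[0,L)$ and use the distributional identity \eqref{misura}, $\mathrm D u^X=\ud x-\mu^X$. Since $u^X$ is $L$-periodic and $BV_{\rm loc}$, and $\phi_k$ is smooth and $L$-periodic, the boundary terms cancel, giving
\begin{equation*}
\int_0^L u^X\phi_k\,\ud x=-\int_{[0,L)} \Phi_k\,\ud(\mathrm D u^X),\qquad \Phi_k(x):=\frac{L^{\frac12}}{-2\pi i k}\, L^{-\frac12}e^{-\frac{2\pi}{L}ikx},
\end{equation*}
where $\Phi_k$ is the $L$-periodic primitive of $\phi_k$.

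To make the integration by parts rigorous, I would take an interval $[a,a+L)$ with $a\notin\sss(X)+L\Z$, so that $u^X$ is continuous at the endpoints. This works since $u^X$ is piecewise affine with slope $1$ and jumps of $-m$ at points of multiplicity $m$. The product $u^X\Phi_k$ then takes equal values at $a$ and $a+L$. By periodicity of all the objects, the integral over $[a,a+L)$ equals the integral over $[0,L)$. Equivalently, one can argue on each open interval between consecutive atoms, where $u^X$ is affine, and sum the jump contributions.

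Next we evaluate the right-hand side. The Lebesgue part contributes $\int_0^L\Phi_k\,\ud x=0$ because $k\neq0$. The atomic part contributes $+\sum_{l=1}^L\Phi_k(x_l)$, with collisions counted with multiplicity, which is consistent since $\mu^X$ counts each index. Hence
\begin{equation*}
\F[u^X](k)=\sum_{l}\frac{1}{-2\pi i k}e^{-\frac{2\pi}{L}ikx_l}\cdot L^{\frac12}\cdot L^{-\frac12}\cdot L^{\frac12}.
\end{equation*}
Simplifying the constants carefully, $\Phi_k=\frac{L}{-2\pi i k}\phi_k=\frac{iL}{2\pi k}L^{-\frac12}e^{-\frac{2\pi}{L}ikx}$, so the result is $i\frac{L^{1/2}}{2\pi k}\sum_l e^{-\frac{2\pi}{L}ikx_l}$, as claimed.

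The only delicate step is the sign and boundary bookkeeping in the integration by parts for a BV function with jumps. I would double-check it on the piecewise-affine representation of $u^X$, confirming that each atom contributes $+\Phi_k(x_l)$ because the jump of $u^X$ there is $-1$ per particle.
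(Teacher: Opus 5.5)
Your proof is correct and is essentially the paper's argument: the paper writes $i\frac{2\pi}{L}k\,\F[u^X](k)=\F[\ud x-\mu^X](k)$, which is exactly your periodic integration by parts. It then evaluates the Lebesgue part as zero and the atomic part as $-L^{-\frac12}\sum_l e^{-\frac{2\pi}{L}ikx_l}$. One small slip: your first displayed definition of $\Phi_k$ is missing a factor $L^{\frac12}$. The correct primitive is $\Phi_k=\frac{L}{-2\pi i k}\phi_k$, as you write later, so the final formula is unaffected.
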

\begin{proof}
By  \eqref{zeroaverage}, $\F[u^X](0)=L^{-\frac 1 2}\int_{0}^{L}u^X\ud x=0$. Let $k\in\Z\setminus\{0\}$. Then, 
\begin{equation*}
i\frac{2\pi}{L}k\F[u^{X}](k)=\F[\ud x-\mu^X](k)=\int_{0}^{L}\phi_k(x)\,\ud x-\int_{0}^{L}\phi_k(x)\,\ud\mu^X=-L^{-\frac 1 2}\sum_{l=1}^{L}e^{-\frac{2\pi}{L}ikx_l}.
\end{equation*}
\end{proof}
With Lemmas \ref{fourfraclap} and \ref{fourieru} in hand, we can finally prove Proposition \ref{persigma}.
\begin{proof}[Proof of Proposition \ref{persigma}]
We first focus on the case $-\frac 1 2<\sigma<\frac 1 4$.
In view of Remark \ref{inverl2}, in this case the duality defining $\E^\sigma$ in \eqref{ene} can be understood in the $L^2$-sense.
Therefore, by Parseval identity, Lemma \ref{fourfraclap}, Lemma \ref{fourieru} and \eqref{csenzcapp}, we have
\begin{equation}\label{conparse}
\begin{aligned}
\E^\sigma(X)=&\,
\frac12 \sum_{k\in\Z}\overline{\F[u^X](k)}\F[(-\Delta)^\sigma u^X](k)=\frac12 \widehat C_{\sigma,L}\sum_{k\in\Z\setminus\{0\}}|k|^{2\sigma}\big|\F[u^X](k)\big|^2\\
=&\,
\frac{L}{8\pi^2} \widehat C_{\sigma,L}\sum_{k\in\Z\setminus\{0\}}|k|^{2\sigma-2}\sum_{l,l'=1}^{L}\cos\Big(\frac{2\pi}{L}k\big(x_l-x_{l'}\big)\Big)\\
=&\,C_{\sigma,L}\sum_{k\in\Z\setminus\{0\}}|k|^{2\sigma-2}\sum_{l,l'=1}^{L}\cos\Big(\frac{2\pi}{L}k\big(x_l-x_{l'}\big)\Big),
\end{aligned}
\end{equation}
which proves \eqref{risfou} in this case.

Finally, we discuss the case $\frac 1 4\le \sigma<\frac 1 2$.
Let $\rho$ be a standard mollifier and set $\rho_\ep(\cdot):=\frac 1 \ep\rho\big(\frac \cdot \ep\big)$ for every $0<\ep<1$. 
Let $v\in L^\infty(\R)$ be an $L$-periodic function.
Then, $v\ast\rho_\ep$ is a $C^\infty$ and $L$-periodic function on $\R$.
By arguing as in the proof of \eqref{perknon0}, one can show that for every $k\in\Z$
\begin{equation}\label{convoF}
\F[v\ast\rho_\ep](k)=\F[v](k)\int_{\R}\rho_\ep(z)e^{\frac{2\pi}{L}ikz}\,\ud z\qquad\textrm{for every }k\in\Z.
\end{equation}
Let now $X\in \X^L$.
By applying \eqref{convoF} with $v=u^X$
we have that
\begin{equation}\label{convo0}
\F[u^X\ast\rho_\ep](k)=\F[u^X](k)\int_{\R}\rho_\ep(z)e^{\frac{2\pi}{L}ikz}\,\ud z.
\end{equation}
Moreover, since $(-\Delta)^\sigma (u^X\ast\rho_\ep)=((-\Delta)^\sigma u^X)\ast\rho_\ep$, by applying \eqref{convoF} with $v=(-\Delta)^\sigma u^X$, for every $k\in\Z$ we have
\begin{equation*}
\F[(-\Delta)^\sigma(u^X\ast\rho_\ep)](k)=\F[(-\Delta)^\sigma u^X](k)\int_{\R}\rho_\ep(z)e^{\frac{2\pi}{L}ikz}\,\ud z,
\end{equation*}
which, in view of Lemma \ref{fourfraclap} (in particular, by \eqref{trasfdelta}), implies 
\begin{equation}\label{convodelta}
\F[(-\Delta)^\sigma(u^X\ast\rho_\ep)](k)=
\widehat C_{\sigma,L}|k|^{2\sigma}\,\F[u^X](k)\int_{\R}\rho_\ep(z)e^{\frac{2\pi}{L}ikz}\,\ud z\quad\textrm{for }k\in\Z\setminus\{0\}.
\end{equation}
Therefore, in view of \eqref{convo0} and \eqref{convodelta}, by arguing as in \eqref{conparse}, for every $\ep>0$, we have
\begin{equation}\label{conappro}
\begin{aligned}
&\,\frac12 \langle u^X\ast\rho_\ep, (-\Delta)^\sigma (u^X\ast\rho_\ep)\rangle_{(0,L)}\\
=&\,\frac12 \sum_{k\in\Z}\overline{\F[u^X](k)}\F[(-\Delta)^\sigma u^X](k)\Big|\int_{\R}\rho_\ep(z)e^{\frac{2\pi}{L}ikz}\,\ud z\Big|^2\\
=&\,C_{\sigma,L}\sum_{k\in\Z\setminus\{0\}}|k|^{2\sigma-2}\sum_{l,l'=1}^{L}\cos\Big(\frac{2\pi}{L}k\big(x_l-x_{l'}\big)\Big)\Big|\int_{\R}\rho_\ep(z)e^{\frac{2\pi}{L}ikz}\,\ud z\Big|^2.
\end{aligned}
\end{equation}
Since $2\sigma-2<-1$, we have that the series on the righthand side of \eqref{conappro} converges absolutely, so that, sending $\ep\to 0$ in \eqref{conappro}, we get \eqref{risfou} also in this case.

The proof is concluded. 
\end{proof}
\section{Proof of Theorem \ref{mainthm}}
The key ingredient in order to prove Theorem \ref{mainthm} is the celebrated result  by Cohn and Kumar in \cite[Theorem 1.2]{CK}.
We recall it here only in dimension $2$, that is the case involved in our problem. We recall that a $C^\infty$ function $V:I\to \R$ is {\it strictly  completely monotone} on the interval $I$ if 
\begin{equation}\label{complemon}
(-1)^j\frac{\ud^j}{\ud \zeta^j}V(\zeta)\ge 0,\qquad\textrm{for every }j\in\N, \quad \zeta\in I.
\end{equation}
Moreover, $V$ is {\it strictly completely monotone} if the inequality in \eqref{complemon} is strict in the interior of $I$.
For every $L\in\N$ we set
\begin{equation*}
\mathcal{C}^L:=\big\{C=(z_1,\ldots,z_L)\,:\,z_l\in \mathbb{S}^1\textrm{ for every }l=1,\ldots,L,\quad z_l\neq z_{l'}\textrm{ for }l\neq l'\big\}.
\end{equation*}
\begin{theorem}[Cohn-Kumar]\label{cohnkumar}
Let $V:(0,4]\to (0,+\infty)$ be a strictly completely monotone function. Then, for every $L\in\N$ the unique, up to rotations and permutations of the indices, minimizer of the energy
\begin{equation*}
\mathscr{H}_V(C):=\sum_{\newatop{l,l'=1}{l\neq l'}}^{L}V(|z_l-z_{l'}|^2)
\end{equation*}
in $\mathcal{C}^L$ is the regular $L$-gon $\overline{C}_L:=(1, e^{\frac{2\pi}{L}i}, \ldots e^{\frac{2\pi}{L}i(L-1)})$.
\end{theorem}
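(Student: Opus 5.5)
We plan to follow the linear programming (Delsarte--Yudin) strategy of Cohn and Kumar, which on $\mathbb S^1$ becomes an argument with Fourier series. Writing $z_l=e^{i\theta_l}$, we have $|z_l-z_{l'}|^2=2-2\cos(\theta_l-\theta_{l'})$. So $\mathscr H_V(C)=\sum_{l\neq l'}f(\theta_l-\theta_{l'})$, where $f(\theta):=V(2-2\cos\theta)$ is even, $2\pi$-periodic and smooth away from $\theta\in2\pi\Z$.

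The key step is to construct an auxiliary trigonometric polynomial
\begin{equation*}
h(\theta)=\sum_{k=0}^{L-1}h_k\cos(k\theta),\qquad h_k\ge 0\ \textrm{ for }1\le k\le L-1,
\end{equation*}
satisfying $h\le f$ on $(0,2\pi)$ and $h=f$ at the nontrivial angles $\theta=\frac{2\pi}{L}j$, $j=1,\ldots,L-1$, of the regular $L$-gon. Such an $h$ gives the lower bound at once:
\begin{equation*}
\mathscr H_V(C)\ge\sum_{l\neq l'}h(\theta_l-\theta_{l'})=\sum_{k=0}^{L-1}h_k\Big|\sum_{l=1}^{L}e^{ik\theta_l}\Big|^2-L\,h(0)\ge L^2h_0-L\,h(0).
\end{equation*}
Both inequalities are equalities for $\overline C_L$: the interpolation conditions handle the first one, and $\sum_l e^{ik\theta_l}=0$ for $1\le k\le L-1$ handles the second. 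This proves minimality.

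To build $h$, we first reduce to elementary potentials. Complete monotonicity, via Bernstein's theorem suitably localized to $(0,4]$, writes $V(r)=\int e^{-tr}\,\ud\nu(t)$ with $\nu\ge0$, possibly after adding a nonnegative combination of terms that are absolutely monotone in $4-r$. Since the LP conditions are preserved under positive combinations, this reduces matters to $V=e^{-tr}$. It is not enough to treat only monomials $(4-r)^{-m}$ or Gaussians separately, so we would follow Cohn--Kumar and work with the Hermite interpolant of $V$ at the inner products $\cos\frac{2\pi j}{L}$. Expressed in the variable $u=\cos\theta$, we take $h$ to be the polynomial of degree $\le L-1$ that Hermite-interpolates $F(u):=V(2-2u)$ at the nodes $\cos\frac{2\pi j}{L}$. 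The node $u=-1$ is simple when $L$ is even, and every other node is double, since it comes from the pair $\pm\theta$. The error formula then gives
\begin{equation*}
F-h=\frac{F^{(L)}(\xi)}{L!}\prod(u-u_j)^{m_j}.
\end{equation*}
The derivative factor is $\ge0$ because $F^{(L)}(u)=2^L(-1)^LV^{(L)}(2-2u)>0$. The product of the node factors is $\ge0$ on $[-1,1]$, because it equals a positive multiple of $T_L(u)-1$ (with $T_L$ the Chebyshev polynomial), up to sign.

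The main obstacle is positivity of the cosine coefficients $h_k$, $1\le k\le L-1$. Our first attempt will use divided differences. Expand $h$ in Newton form at the nodes; each divided difference of $F$ is positive, being an average of a positive derivative. The partial products of $(u-u_j)$ over the nodes, taken in the natural order coming from the $L$-gon, have nonnegative Chebyshev expansions, since they relate to products of the form $\prod(\cos\theta-\cos\alpha_j)$ for $L$-th roots of unity. This is exactly Cohn--Kumar's positivity lemma for $\mathbb S^1$, where the relevant Gegenbauer polynomials are $\cos(k\theta)$.

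For uniqueness, suppose $C$ is an optimal configuration; then equality must hold throughout. Because $F^{(L)}>0$ strictly, $F-h>0$ away from the nodes, so every difference $\theta_l-\theta_{l'}$ with $l\neq l'$ must lie in $\frac{2\pi}{L}\Z$. Then $L$ distinct points with all pairwise differences in $\frac{2\pi}{L}\Z$ form a rotated regular $L$-gon.
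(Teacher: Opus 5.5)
The paper gives no proof of this statement: it is quoted from Cohn--Kumar (their Theorem 1.2), and your proposal reconstructs their linear-programming argument on $\mathbb S^1$. The parts you actually carry out are sound: the bound $\mathscr H_V(C)\ge L^2h_0-Lh(0)$ given $h\le f$ and $h_k\ge 0$ for $k\ge 1$, the sign of the Hermite remainder, and the uniqueness step.

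\textbf{The gap.} It lies in the step that carries all the difficulty, $h_k\ge0$. Your claim that the Newton partial products taken ``in the natural order coming from the $L$-gon'' have nonnegative Chebyshev expansions is false. Write $T_k(\cos\theta)=\cos k\theta$ and $u_j=\cos\frac{2\pi j}{L}$. The order $u_1,u_2,\dots$ gives
\[
(u-u_1)(u-u_2)=\tfrac12T_2-(u_1+u_2)T_1+\big(\tfrac12+u_1u_2\big)T_0,\qquad u_1+u_2=2\cos\tfrac{3\pi}{L}\cos\tfrac{\pi}{L}>0\ \ (L\ge7).
\]
Grouping the double nodes is worse: $(u-u_1)^2$ has $T_1$-coefficient $-2\cos\frac{2\pi}{L}<0$ for $L\ge5$. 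These products enter $h$ with strictly positive divided differences, so the termwise argument collapses. The allusion to $\prod(\cos\theta-\cos\alpha_j)$ does not repair it.

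\textbf{What is needed.} You need Cohn--Kumar's actual positivity lemma, with the nodes ordered increasingly (the one nearest $-1$ first). Let $r_1<\dots<r_k$ be the nodes other than $-1$. They are the zeros of the top member $p_k$ of an explicit orthogonal family with nonnegative cosine coefficients:
\begin{itemize}
\item for odd $L$, $D_j(\cos\theta)=\sin((j+\frac12)\theta)/\sin\frac\theta2=1+2\sum_{i\le j}\cos i\theta$;
\item for even $L$, $U_j(\cos\theta)=\sin((j+1)\theta)/\sin\theta$, where the simple node $-1$ goes first and contributes the factor $1+u=T_0+T_1$.
\end{itemize}
One must show that $q_j=\prod_{i\le j}(u-r_i)$ is a nonnegative combination of $p_0,\dots,p_j$. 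For $j=k-1$ this is Christoffel--Darboux plus interlacing, since $p_i(r_k)>0$ for $i<k$. One then iterates: by Gauss quadrature, $p_k/(u-r_k)$ is the top orthogonal polynomial of the positive discrete measure $\sum_{s<k}w_s(r_k-r_s)\delta_{r_s}$. The orthogonal polynomials of that measure are Christoffel kernels, which are themselves nonnegative combinations of the $p_m$. The Newton partial products are then $q_j^2$ and $q_jq_{j+1}$ (times $1+u$ for even $L$). Nonnegativity of cosine coefficients is stable under products, since $2\cos a\cos b=\cos(a+b)+\cos(a-b)$.

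\textbf{Smaller slips.}
\begin{itemize}
\item There are $L-1$ interpolation conditions, so $\deg h\le L-2$ and the remainder carries $F^{(L-1)}(\xi)/(L-1)!$, not $F^{(L)}(\xi)/L!$.
\item The node product equals $\frac{T_L(u)-1}{2^{L-1}(u-1)}$ (a Fej\'er kernel in $\theta$), not a multiple of $T_L-1$. Its sign on $[-1,1]$ is still nonnegative, so your conclusion there stands.
\item The Bernstein paragraph is wrong and unneeded. On the bounded interval $(0,4]$, complete monotonicity means $V(r)=\sum_n c_n(4-r)^n$ with $c_n\ge0$, not a Laplace transform. Also, $(4-r)^{-m}$ is not completely monotone there.
\end{itemize}
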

Now we provide the proof of Theorem \ref{mainthm}.
Let $-\frac{1}{2}<\sigma<\frac 12$ and let $X\in \X^L$. 
By Proposition \ref{persigma}, applying \eqref{gammafun2} with $\beta=2-2\sigma$ and $u=|k|$, and exchanging the order of summation, we have
\begin{equation}\label{intermstep}
\E^\sigma(X)=\frac{C_{\sigma,L}}{\Gamma(2-2\sigma)}\int_{0}^{+\infty} t^{1-2\sigma}\sum_{l,l'=1}^{L}\sum_{k\in\Z\setminus\{0\}}e^{-|k|t}\cos\Big(\frac{2\pi}{L}k(x_l-x_{l'})\Big)\,\ud t.
\end{equation}
Using the identity
\begin{equation*}
\sum_{k\in\Z} r^{|k|}\cos(k\theta)=\frac{1-r^2}{1-2r\cos\theta+r^2}=\frac{1-r^2}{(1-r)^2+2r(1-\cos\theta)},
\end{equation*}
with $r=e^{-t}$ and $\theta=\frac{2\pi}{L}(x_l-x_{l'})$, for every $l,l'=1,\ldots,L$ we obtain
\begin{equation}\label{conpoi}
\sum_{k\in\Z\setminus\{0\}}e^{-|k|t}\cos\Big(\frac{2\pi}{L}k(x_l-x_{l'})\Big)=\frac{1-e^{-2t}}{(1-e^{-t})^2+2e^{-t}\Big(1-\cos\Big(\frac{2\pi}{L} (x_l-x_{l'})\Big)\Big)}-1.
\end{equation}
Since
\begin{equation}\label{ins1}
\big|e^{\frac{2\pi}{L}ix_l}-e^{\frac{2\pi}{L}ix_{l'}}\big|^2=2\Big(1-\cos\Big(\frac{2\pi}{L}(x_l-x_{l'})\Big)\Big)\qquad\textrm{for every }l,l'=1,\ldots,L,
\end{equation}
by \eqref{conpoi}, we get
\begin{equation}\label{intermstep1}
\sum_{k\in\Z\setminus\{0\}}e^{-|k|t}\cos\Big(\frac{2\pi}{L}k(x_l-x_{l'})\Big)=\frac{1-e^{-2t}}{(1-e^{-t})^2+e^{-t}|e^{\frac{2\pi}{L}ix_l}-e^{\frac{2\pi}{L}ix_{l'}}|^2}-1.
\end{equation}
Hence, setting, for every $t>0$, 
\begin{equation}\label{potential}
V_t(\zeta):=\frac{1-e^{-2t}}{(1-e^{-t})^2+e^{-t}\zeta},\qquad (\zeta\ge 0)
\end{equation}
and
\begin{equation}\label{applcohnkumar}
\G_t(X):=\sum_{\newatop{l,l'=1}{l\neq l'}}^{L}V_t\big(\big|e^{\frac{2\pi}{L}ix_l}-e^{\frac{2\pi}{L}ix_{l'}}\big|^2\big),
\end{equation}
by \eqref{intermstep} and \eqref{intermstep1}, we obtain
\begin{equation}\label{energiaintegrata}
\E^\sigma(X)=\frac{C_{\sigma,L}}{\Gamma(2-2\sigma)}\int_{0}^{+\infty} t^{1-2\sigma}\Big(\G_t(X)+L\frac{1+e^{-t}}{1-e^{-t}}-L^2\Big)\,\ud t.
\end{equation}
Let $\mathfrak{b}:\rs^L\to \mathcal{C}^L$ be the bijection defined by $\mathfrak{b}(X):=(e^{\frac{2\pi}{L}ix_1},\ldots,e^{\frac{2\pi}{L}ix_L})$.
Trivially, $\mathfrak{b}(\overline X^L)=\overline{C}^L$.
Moreover, for every $X\in\rs^L$ we have that $\G_t(X)=\mathscr{H}_{V_t}(\mathfrak{b}(X))$.
For every $t>0$, the potential $V_t$ defined in \eqref{potential} is strictly completely monotone in $[0,+\infty)$; hence, by Theorem \ref{cohnkumar}, $\overline X^L$ is the unique, up to global translations and permutations of the indices, minimizer of $\G_t$ in $\rs^L$, and, in view of \eqref{energiaintegrata}, the unique, up to global translations and permutations of the indices, minimizer of $\E^\sigma$ in $\rs^L$.
Moreover, by continuity, for every $t>0$, $\overline X^L$ is a minimizer of $\G_t$ in $\X^L$ and, in view of
\eqref{energiaintegrata}, 
 a minimizer of $\E^\sigma$ in $\X^L$. 

Now we show that there are no further  minimizers of  $\E^\sigma$ 
 in $\X^L\setminus\rs^L$.
Assume by contradiction that there exists $\widehat X^L\in \X^L\setminus\rs^L$ such that $\E^\sigma(\widehat X^L)=\E^\sigma(\overline X^L)$. Then, by \eqref{energiaintegrata} and by the mean value theorem, for every $t>0$, $\widehat X^L$ minimizes $\G_t$ in $\X^L$ and hence $\G_t(\widehat X^L)=\G_t(\overline X^L)$. Moreover, since for every $\zeta>0$
$$
\lim_{t\to 0^+}\frac{1}{1-e^{-2t}}V_t(\zeta)=\frac{1}{\zeta}, 
$$
 we get that 
 $$
+\infty =\lim_{t\to 0^+}\frac{1}{1-e^{-2t}}\G_t(\widehat X^L)= \lim_{t\to 0^+}\frac{1}{1-e^{-2t}}\G_t(\overline X^L)<+\infty,
 $$
 thus providing a contradiction. This concludes the proof of Theorem \ref{mainthm}.

\end{document}